\newtheorem{thm}{Theorem}
\theoremstyle{definition}
\newtheorem{lem}{Lemma}
\newcommand{\im}{\text{im}}
\newcommand{\Z}{\mathbb{Z}}
\newcommand{\R}{\mathbb{R}}
\newcommand{\C}{\mathbb{C}}
\author{Cole Hugelmeyer} 
\title{A solution to the periodic square peg problem}
\begin{document}

\maketitle 

\begin{abstract} We resolve the periodic square peg problem using a simple Lagrangian Floer homology argument.  Inscribed squares are interpreted as intersections between two non-displaceable Lagrangian sub-manifolds of a symplectic 4-torus. \end{abstract}

\section{Introduction}

The Toeplitz square peg conjecture is the long-standing open problem of whether every planar Jordan curve has an inscribed square \cite{survey}\cite{pegs}. Much recent progress has been made on this problem and its variants through the use of symplectic geometry \cite{rect}\cite{quads}\cite{floerpeg}\cite{graphpeg}. In this paper, we utilize this approach to solve the periodic square peg conjecture, discussed by Tao in \cite{tao}. 

We prove the following. 

\begin{thm}[Periodic Square Peg Problem]
Suppose $f$ and $g$ are injective continuous functions $\R\to \R^2$ with disjoint images, satisfying $f(x+1) = f(x) + (0,1)$ and $g(x+1) = g(x) + (0,1)$ for all $x\in \R$. Then there exists a set of four distinct points in $\im(f) \cup \im(g)$ that form the corners of a square in the plane. 
\end{thm}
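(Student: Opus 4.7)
The plan is to reformulate the existence of an inscribed square as a non-trivial Lagrangian intersection in a compact symplectic 4-torus, and then deduce non-emptiness of the intersection from Lagrangian Floer homology.

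First, I would exploit the periodicity: $f$ and $g$ descend to embedded smooth loops $\bar f,\bar g:S^1\hookrightarrow C$, where $C=\R\times(\R/\Z)$ is the cylinder. Inscribed squares in $\im(f)\cup\im(g)$ correspond, modulo the $\Z$-action by vertical translation, to configurations of four cylinder points. I would focus on squares with the alternating ``2--2'' distribution (two vertices on each curve as opposite corners of the square), parametrized by $(s_1,t_1,s_2,t_2)\in T^4=(S^1)^4$, where $s_i$ labels a point on $\bar f$ and $t_i$ a point on $\bar g$, and $(s_1,t_1)$, $(s_2,t_2)$ are the two diagonals.

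Second, I would equip $T^4$ with a symplectic form arising from the standard form on $\R^2\times\R^2$ together with the periodicity identifications, and construct a Lagrangian surface $L\subset T^4$ cut out by the ``shared midpoint'' condition $f(s_1)+g(t_1)\equiv f(s_2)+g(t_2)\pmod{(0,1)}$. A second Lagrangian $L'=\rho(L)$ is obtained by applying a symplectic involution $\rho$ implementing the $90^\circ$ rotation of the difference vector $g(t)-f(s)$; intersections $L\cap L'$ then correspond to inscribed squares, after excluding a degenerate locus. Because $\rho$ is generated by a Hamiltonian flow (rotation in the plane is Hamiltonian), $L'$ is Hamiltonian-isotopic to $L$.

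Third, I would invoke Lagrangian Floer homology to show $HF^*(L,L)\neq 0$, so that $L$ is non-displaceable; this forces $L\cap L'\neq\emptyset$. Once an intersection is produced, the disjointness hypothesis $\im(f)\cap\im(g)=\emptyset$ is used to confirm that the four associated points are distinct, giving a non-degenerate square. The main obstacle is step two: arranging the symplectic form on $T^4$ so that both $L$ and $L'$ are genuine Lagrangian tori and the classical geometric square condition coincides exactly with the algebraic intersection condition---this is the essential content of translating the plane-geometry statement into symplectic topology. A secondary technical burden is the standard verification that the candidate Lagrangian torus satisfies the hypotheses (monotonicity, (relative) spin structure, absence of stray disk bubbles) needed for the non-vanishing Floer cohomology computation.
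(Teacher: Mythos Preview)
Your overall strategy—encode inscribed squares as Lagrangian intersections in a symplectic $4$-torus and invoke Floer homology—is the paper's strategy. The gap is at the heart of your argument: the claim that the quarter-turn $\rho$ is Hamiltonian, so that $L'=\rho(L)$ is Hamiltonian isotopic to $L$ and one may simply cite $HF(L,L)\neq 0$. In your cylinder model $C=\R\times(\R/\Z)$ a $90^\circ$ rotation of the difference vector swaps the periodic and non-periodic directions and does not descend at all. If one instead compactifies to the square torus $\C/\Z[i]$ (which the paper does, after rescaling), multiplication by $i$ does descend and the relevant map becomes a \emph{linear} symplectomorphism of $(\C/\Z[i])^2$; but it then acts nontrivially on $H_1$, so it is not isotopic to the identity and in particular not Hamiltonian. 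Thus the reduction to self-Floer homology of $L$ is unavailable, and this is precisely the step where the problem has content.

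The paper avoids this as follows. On $((\C/\Z[i])^2,\,\omega_\pm)$ with $\omega_\pm=\pi_1^*\omega-\pi_2^*\omega$, the product $f\times g$ is Lagrangian, and the (non-Hamiltonian) symplectomorphism $\tau(a,b)=(a+i(b-a),\,b+i(b-a))$ has the property that $(f\times g)\cap\tau(f\times g)$ parametrizes the sought squares. Rather than relate $\tau(f\times g)$ to $f\times g$ by an ambient Hamiltonian isotopy, one Hamiltonian-isotopes the \emph{curves} $f,g$ inside $\C/\Z[i]$ to straight circles $f_0,g_0$; this moves both Lagrangians to the model pair $f_0\times g_0$ and $\tau(f_0\times g_0)$, and one computes $HF(f_0\times g_0,\tau(f_0\times g_0))$ directly—a $4$-fold cover splits it as a product of circle pairs on two $2$-tori, giving rank $4$ by K\"unneth. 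Your outline also omits the separate $C^0$-approximation and compactness argument needed to pass from the smooth torus statement back to merely continuous periodic curves in $\R^2$.
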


In particular, we will prove the following result, from which the above theorem is a corollary.

\begin{thm}
Let $f$ and $g$ be smooth embeddings $S^1\to \C/\Z[i]$, both isotopic to the circle $\R/\Z$ and with disjoint images. Then there exist $a_1,a_2,b_1,b_2$ in $S^1$ such that $f(a_2) = f(a_1) + i\cdot(g(b_1) - f(a_1))$ and $g(b_2) = g(b_1) + i\cdot(g(b_1) - f(a_1))$. 
\end{thm}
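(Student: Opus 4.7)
The plan is to recast Theorem 2 as a Lagrangian intersection problem on a symplectic 4-torus. On $T^4 = (\C/\Z[i])^2$ with coordinates $(p,q)$, equip the manifold with the indefinite symplectic form $\omega = dp_1 \wedge dp_2 - dq_1 \wedge dq_2$, i.e., the standard area form on the first factor minus that on the second. The product $L := \im(f) \times \im(g)$ is then a smooth Lagrangian 2-torus in $(T^4, \omega)$: tangent vectors at $(f(a), g(b))$ have block form $(v,0)$ or $(0,w)$, on which $\omega$ evaluates to zero. Introduce the complex-linear endomorphism with matrix $A = \begin{pmatrix} 1-i & i \\ -i & 1+i \end{pmatrix}$. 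A direct calculation gives $A^* H A = H$ where $H = \mathrm{diag}(1,-1)$, so $A \in U(1,1)$ preserves $\omega$; its associated real $4 \times 4$ representative has integer entries, hence $A$ descends to a symplectomorphism of $(T^4, \omega)$. Unpacking the equations of Theorem 2 shows they amount to $A(f(a_1), g(b_1)) = (f(a_2), g(b_2))$, so the theorem reduces to the claim that $L \cap A^{-1}(L) \neq \emptyset$.

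To produce the intersection, I would pass to a linear model via deformation invariance of Lagrangian Floer cohomology. Take $f_0(x) = x$ and $g_0(x) = x + ic$ for some $c \in (0,1)$: two disjoint horizontal circles at vertical spacing $c$. A direct computation (writing out real and imaginary parts of $A(X, Y+ic)$ and imposing $L_0$-membership) yields $L_0 \cap A^{-1}(L_0) = \{(X, X+ic) : X \in \R/\Z\}$, a cleanly-embedded copy of $S^1 \subset T^4$: at each intersection point the intersection of tangent spaces $T L_0 \cap T A^{-1}(L_0)$ is precisely the 1-dimensional tangent to this circle. By the Pozniak--Fukaya clean-intersection principle, $HF(L_0, A^{-1}(L_0))$ is isomorphic to $H^*(S^1)$, which is non-zero. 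Since $(f,g)$ can be continuously deformed to $(f_0, g_0)$ through pairs of disjoint curves each isotopic to $\R/\Z$, the Lagrangians $L_t = \im(f_t) \times \im(g_t)$ (and their images $A^{-1}(L_t)$) deform continuously in $T^4$, and invariance of Floer cohomology under Lagrangian isotopy gives $HF(L, A^{-1}(L)) \cong HF(L_0, A^{-1}(L_0)) \neq 0$. This forces $L \cap A^{-1}(L) \neq \emptyset$, producing the inscribed square.

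The main obstacle is making the Floer-theoretic invariance precise. The algebraic intersection number $[L] \cdot [A^{-1}(L)]$ vanishes in $H_4(T^4)$: writing $H_1(T^4) = \Z\langle \alpha_1, \beta_1, \alpha_2, \beta_2 \rangle$ with $\alpha_i, \beta_i$ the horizontal/vertical generators in the $i$-th factor, one has $[L] = \alpha_1 \wedge \alpha_2$ while $[A^{-1}(L)]$ lies in the span of wedge classes that each contain $\alpha_1$ or $\alpha_2$, so their wedge product is zero. The non-vanishing of $HF(L, A^{-1}(L))$ is therefore a genuinely Floer-theoretic phenomenon beyond ordinary topology, and the delicate part of the argument is to rule out holomorphic disk bubbling on these Lagrangian tori and verify chain-level invariance of the Floer complex throughout the isotopy of pairs $(f_t, g_t)$; the hypothesis that $\im(f)$ and $\im(g)$ are disjoint is essential here, as it ensures $L$ stays away from the fixed locus $\Delta = \{(p,p)\}$ of $A$ throughout the deformation and that the resulting squares are nondegenerate.
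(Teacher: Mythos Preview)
Your setup is exactly the paper's: the same symplectic form $\omega_\pm$ on $T^4$, the same symplectomorphism (your $A$ is the paper's $\tau$), and the same reduction of Theorem~2 to non-emptiness of $L\cap\tau(L)$ followed by a deformation to the linear pair $(f_0,g_0)$. The substantive gap is the phrase ``invariance of Floer cohomology under Lagrangian isotopy.'' Floer cohomology is invariant under \emph{Hamiltonian} isotopy, not under arbitrary Lagrangian isotopy; already for two parallel meridians in $T^2$ the Floer group vanishes unless their fluxes agree. The paper handles this point, and you should too: on each $\C/\Z[i]$ factor, $f$ is Hamiltonian isotopic to $t\mapsto t+\alpha i$ for a \emph{unique} $\alpha\in\R/\Z$ (its flux), and likewise $g$ to $t\mapsto t+\beta i$; disjointness of $\im f$ and $\im g$ forces $\alpha\neq\beta$ since the two curves bound an annulus of positive area strictly less than $1$. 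Taking the product gives a genuine Hamiltonian isotopy $L\simeq L_0$ in $(T^4,\omega_\pm)$, and applying $\tau$ gives one for $\tau(L)\simeq\tau(L_0)$. The bubbling issue you flag is easy once stated correctly: $\pi_2(T^4)=0$ and $\pi_1(L_0)\hookrightarrow\pi_1(T^4)$ is injective, so $\pi_2(T^4,L_0)=0$ and likewise for $\tau(L_0)$; hence Floer theory is unobstructed. Your remark about avoiding the diagonal fixed locus of $A$ is not needed for the Floer argument---it only explains why the resulting square is nondegenerate once an intersection is found.

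Where you genuinely diverge from the paper is in the computation of $HF(L_0,\tau(L_0))$. The paper passes to a $4$-fold cover $c(x,y)=(x+\bar y,\bar x-y)$ in which the two Lagrangians become \emph{product} tori $m\times p$ and $m\times q$, and then applies K\"unneth to get $\dim HF=\dim HF(m,m)\cdot\dim HF(p,q)=2\cdot2=4$, each factor being a standard computation for linear circles in $T^2$. Your clean-intersection route is more direct and your identification $L_0\cap\tau(L_0)\cong S^1$ is correct, but the Pozniak/Morse--Bott statement only gives a spectral sequence from $H^*(S^1)$ to $HF$, and you still owe an argument that no holomorphic strips based on this $S^1$ produce a nonzero differential. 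The paper's covering trick sidesteps exactly this issue by reducing to a product of transverse (or self-) intersections of circles in $T^2$, where the answer is known.
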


\section{The symplectic setup}

Let $\omega$ denote the symplectic area form on $\C/\Z[i]$ that is induced by the standard symplectic form on $\C$. We then define a symplectic form on $(\C/\Z[i])^2$ by the formula $\omega_{\pm} = \pi_1^*\omega - \pi_2^*\omega$, where $\pi_1$ and $\pi_2$ are the projections onto the first and second coordinates respectively. 

Now, we let $\tau: ((\C/\Z[i])^2, \omega_{\pm}) \to ((\C/\Z[i])^2, \omega_{\pm})$ be given by the formula $$\tau(a,b) = (a+ i(b-a), b + i(b-a)).$$ We see that this map is a symplectomorphism, because $$\tau^*\omega_{\pm} = |1-i|^2\pi_1^*\omega + |i|^2\pi_2^*\omega - (|1 + i|^2 \pi_2^*\omega + |-i|^2\pi_1^*\omega) = \pi_1^*\omega - \pi_2^*\omega = \omega_{\pm}.$$

Furthermore, if $f$ and $g$ are smooth embeddings $S^1\to \C/\Z[i]$, then we have a Lagrangian sub-manifold $f\times g: S^1\times S^1 \to (\C/\Z[i])^2$. We then see that the set of 4-tuples $(a_1,a_2,b_1,b_2)$ of points in $S^1$ such that  $f(a_2) = f(a_1) + i\cdot(g(b_1) - f(a_1))$ and $g(b_2) = g(b_1) + i\cdot(g(b_1) - f(a_1))$, is in bijective correspondence with the intersection between the Lagrangian sub-manifolds $f\times g$ and $\tau(f\times g)$. We will utilize an abuse of notation where $f\times g$ represents a map, but also the torus it parameterizes.

Viewing the symplectic manifold $((\C/\Z[i])^2, \omega_{\pm})$ as the Cartesian product $\C/\Z[i] \times \overline{\C/\Z[i]}$, we see that we can induce a Hamiltonian isotopy of $f\times g$ by choosing a Hamiltionian isotopy for $f$ and for $g$ within $\C/\Z[i]$. Thus, we see that if $f$ and $g$ have disjoint images, then the Lagrangian sub-manifold $f\times g$ is Hamiltonian isotopic to $f_0\times g_0$, where $f_0(t) = t + \alpha i$ and $g_0(t) = t + \beta i$ for some choice of $\alpha$ and $\beta$ in $\R/\Z$ with $\alpha \neq \beta$. Applying our symplectomorphism $\tau$, we have that $\tau(f\times g)$ is Hamiltonian isotopic to $\tau(f_0\times g_0)$. 

Therefore, to prove Theorem 2, it suffices to prove the following lemma. 

\begin{lem}
Let $f_0(t) = t + \alpha i$ and $g_0(t) = t + \beta i$ for some choice of $\alpha$ and $\beta$ in $\R/\Z$. Then the Lagrangian sub-manifolds $f_0\times g_0$ and $\tau(f_0\times g_0)$ are non-displaceable. 
\end{lem}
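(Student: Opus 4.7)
The plan is to prove non-displaceability by establishing that the Lagrangian Floer cohomology $HF^*(L, L')$ of the pair $L := f_0\times g_0$ and $L' := \tau(L)$, computed in $((\C/\Z[i])^2, \omega_\pm)$ over an appropriate Novikov field, is non-zero. Since Floer cohomology is Hamiltonian-isotopy invariant in both factors and its non-vanishing obstructs displacement, this suffices for the lemma.

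First I would describe both Lagrangians in real coordinates $(x_1,y_1,x_2,y_2) \in (\R/\Z)^4$ as affine subtori of the flat symplectic $4$-torus. A direct calculation gives $L = \{y_1 = \alpha,\ y_2 = \beta\}$ and $L' = \{y_1 + x_1 - x_2 = \alpha,\ y_2 + x_1 - x_2 = \beta\}$, and these meet cleanly along the circle $C = \{x_1 = x_2,\ y_1 = \alpha,\ y_2 = \beta\}$ since both tangent spaces contain the vector $\partial_{x_1} + \partial_{x_2}$. I would then replace this Morse-Bott intersection by a transverse one via a $C^2$-small Hamiltonian perturbation of $L'$; a convenient choice is the flow of a Morse function on $L'$ supported near $C$, which yields exactly two transverse intersection points $p^{\pm}$ coming from the critical points of a Morse function on $S^1$.

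Next I would compute the Floer differential between $p^{+}$ and $p^{-}$ by lifting to the universal cover $\C^2 \to (\C/\Z[i])^2$, where $L$ and $L'$ lift to countable arrangements of parallel affine Lagrangian $2$-planes. With respect to an $\omega_\pm$-compatible flat almost complex structure, the Floer strips between the transverse generators are enumerated by a pair of deck-translation parameters $(m,n) \in \Z^2$, and each contributes a Novikov monomial $\pm T^{A(m,n)}$ whose exponent is the signed symplectic area of the strip and is a nondegenerate quadratic function of $(m,n)$. The Floer differential is then an explicit theta-type series in the Novikov variable.

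The main obstacle is showing that this theta series does not conspire to cancel both generators in cohomology. I expect this to follow from classical non-vanishing statements for Riemann theta functions associated to the relevant quadratic form, paralleling the mirror-symmetric description of morphisms between linear Lagrangian branes on an Abelian variety. Once the non-vanishing is verified, $HF^*(L, L') \neq 0$, and the lemma follows.
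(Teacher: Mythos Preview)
Your setup is correct: $L=\{y_1=\alpha,\;y_2=\beta\}$ and $L'=\{y_1+x_1-x_2=\alpha,\;y_2+x_1-x_2=\beta\}$ are linear Lagrangian subtori meeting cleanly along a circle, and after a small Morse--Bott perturbation one has exactly two generators $p^{\pm}$.  The difficulty is the last step, and here there is a genuine gap.  With two generators, $HF(L,L')\neq 0$ is equivalent to the single Floer differential coefficient \emph{vanishing} over the Novikov field.  You describe this coefficient as a theta-type series and then invoke ``classical non-vanishing statements for Riemann theta functions'' --- but a non-vanishing differential would force $HF=0$, the opposite of what you want.  The theta functions that appear in the mirror-symmetric description of the Fukaya category of an abelian variety are the Floer \emph{products} (counts of triangles), not the differential; what you actually need is an argument that there are no rigid Maslov-index-$1$ strips (or that their signed count cancels), and your proposal does not supply one.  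If instead you intend to invoke full homological mirror symmetry for abelian varieties to identify $HF(L,L')$ with a space of sections on the mirror, that is a far heavier input than the problem requires, and it is not what the write-up says.

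The paper takes a different and much lighter route that sidesteps this computation entirely.  It introduces the $4$-fold cover $c:(\C/\Z[i])^2\to(\C/\Z[i])^2$, $c(x,y)=(x+\bar y,\;\bar x-y)$, under which $f_0\times g_0$ and $\tau(f_0\times g_0)$ lift to \emph{product} Lagrangians $m\times p$ and $m\times q$ in $\C/\Z[i]\times\overline{\C/\Z[i]}$, with $m,p,q$ explicit linear circles in $T^2$.  K\"unneth then gives
\[
\dim HF(m\times p,\,m\times q;\Lambda)=\dim HF(m,m;\Lambda)\cdot\dim HF(p,q;\Lambda)=2\cdot 2=4,
\]
using only the standard $T^2$ facts that $HF(m,m)\cong H^*(S^1)$ and that two transverse linear circles of distinct slope have vanishing Floer differential.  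A displacing Hamiltonian isotopy in the base would lift to one in the cover, contradicting this.  The covering trick is precisely what converts the awkward non-product pair $(L,L')$ into a product pair where no strip-counting in $T^4$ is needed.
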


\begin{proof}
There is a 4-fold covering map $c: (\C/\Z[i])^2 \to (\C/\Z[i])^2$ given by $c(x,y) = (x+\overline{y}, \overline{x}-y)$. Deck transformations come from translating by $1/2$, $i/2$, or $(1+i)/2$ in both coordinates. Given $f_0(t) = t + \alpha i$ and $g_0(t) = t + \beta i$, we choose $\mu\in \R/\Z$ so that $\mu + \mu = \alpha - \beta$, and we let $\delta = \alpha - \mu$.  Then, we define maps $m,p,q:S^1\to \C/\Z[i]$ given by $m(t) = t + \mu i$ and $p(t) = t - \delta i$ and $q(t) = (1 + 2i)t - \delta i$. Then mapped under $c$, we have that $m\times p$ double covers $f_0\times g_0$, and $m\times q$ double covers $\tau(f_0\times g_0)$. Furthermore, we have that $c^*(\omega_\pm) = 4\omega_\pm$. Since a Hamiltonian isotopy in the base space will induce a Hamiltonian isotopy in the covering space, we see that it suffices to prove that $m\times p$ and $m\times q$ are non-displaceable in $((\C/\Z[i])^2,\omega_\pm)$. In this situation, Floer homology is unobstructed because $\pi_2((\C/\Z[i])^2)$,  $\pi_2((\C/\Z[i])^2, m\times p)$, and $\pi_2((\C/\Z[i])^2, m\times q)$ are all trivial \cite{book}. Furthermore, due to the product structure, we can compute the Lagrangian intersection Floer homology of these sub-manifolds by reducing to a computation of the Lagrangian Floer homology of circles within the constituent 2-tori.  We have $$\dim(HF(m\times p, m\times q; \Lambda)) = \dim(HF(m, m; \Lambda))\cdot \dim( HF(p, q; \Lambda) )= 2*2 = 4.$$ Where $\Lambda$ is the Novikov field. Therefore, $m\times p$ and $m\times q$ are non-displaceable, so $f_0\times g_0$ and $\tau(f_0\times g_0)$ are also non-displaceable. 
\end{proof}

Now that we have Lemma 1, Theorem 2 follows from the prior remarks.

\section{proving the final result}

All that remains is to show that Theorem 2 implies Theorem 1. 

\begin{proof}[Proof of Theorem 1.]
Let $f$ and $g$ be as in the statement of Theorem 1. Let $f_1,f_2,...$ and $g_1,g_2,...$ be sequences of periodic smooth embeddings that limit in the $C^0$ topology to $f$ and $g$ respectively, such that $f_n$ and $g_n$ have disjoint images for all $n$.  Then, let $\lambda$ be such that $[-\lambda,\lambda]\times \R$ contains $\im(f_n)$ and $\im(g_n)$ for all $n$, and let $\varepsilon = \inf_{x\in \R, y\in \R, n\in \Z_{>0}} d(f_n(x), g_n(y))$ be a lower bound for the distance between the images of the $f$ and $g$ functions. Let $N$ be an integer greater than $16 \lambda$. Then, applying Theorem 2 to the pair $\tilde{f}_n(t) = \frac{1}{N}f_n(Nt) + \Z[i]$ and $\tilde{g}_n(t) =  \frac{1}{N}g_n(Nt) + \Z[i]$, we get $a_1,a_2,b_1,b_2$ such that $\tilde{f}_n(a_2) = \tilde{f}_n(a_1) + i\cdot(\tilde{g}_n(b_1) - \tilde{f}_n(a_1))$ and $\tilde{g}_n(b_2) = \tilde{g}_n(b_1) + i\cdot(\tilde{g}_n(b_1) - \tilde{f}_n(a_1))$. Since $\im(\tilde{f}_n)$ and $\im(\tilde{g}_n)$ live in a strip of radius $1/16$ around $\R/\Z$, we can choose planar representatives of $\tilde{f}_n(a_1),\tilde{f}_n(a_2),\tilde{g}_n(b_1),$ and $\tilde{g}_n(b_2)$ that live on $\frac{1}{N}\im(f_n) \cup \frac{1}{N}\im(g_n)$ and are all within distance $1/4$ of each other, and which therefore form a square in the plane. This gives us an inscribed square on $\im(f_n) \cup \im(g_n)$ of side length at least $\varepsilon$. We can also assume that this square lives within $[-N,N]\times[-N,N]$. Thus, we can apply compactness to find a sequence of inscribed squares on $\im(f_n) \cup \im(g_n)$ that converge to an inscribed square on $\im(f) \cup \im(g)$. The lower bound on side length guarantees that the resulting square is nondegenerate. This completes the proof. \end{proof}

\newpage
\bibliography{Refrences}{}

\begin{thebibliography}{1}

\bibitem{book}
Kenji Fukaya, Yong-Geun Oh, Hiroshi Ohta, and Kaoru Ono.
\newblock {\em Lagrangian intersection {F}loer theory: anomaly and obstruction.
  {P}art {I}}, volume 46.1 of {\em AMS/IP Studies in Advanced Mathematics}.
\newblock American Mathematical Society, Providence, RI; International Press,
  Somerville, MA, 2009.

\bibitem{rect}
Joshua~Evan Greene and Andrew Lobb.
\newblock The rectangular peg problem.
\newblock {\em Ann. of Math. (2)}, 194(2):509--517, 2021.

\bibitem{quads}
Joshua~Evan Greene and Andrew Lobb.
\newblock Cyclic quadrilaterals and smooth {J}ordan curves.
\newblock {\em Invent. Math.}, 234(3):931--935, 2023.

\bibitem{pegs}
H.~B. Griffiths.
\newblock The topology of square pegs in round holes.
\newblock {\em Proc. London Math. Soc. (3)}, 62(3):647--672, 1991.

\bibitem{floerpeg}
Andrew~Lobb Joshua~Greene.
\newblock Floer homology and square pegs.
\newblock {\em arXiv:2404.05179 [math.SG]}, 2024.

\bibitem{graphpeg}
Andrew~Lobb Joshua~Greene.
\newblock Square pegs between two graphs.
\newblock {\em arXiv:2407.07798 [math.SG]}, 2024.

\bibitem{survey}
Benjamin Matschke.
\newblock A survey on the square peg problem.
\newblock {\em Notices Amer. Math. Soc.}, 61(4):346--352, 2014.

\bibitem{tao}
Terence Tao.
\newblock An integration approach to the {T}oeplitz square peg problem.
\newblock {\em Forum Math. Sigma}, 5:Paper No. e30, 63, 2017.

\end{thebibliography}
\nocite{*}
\bibliographystyle{plain}

\end{document}